\documentclass{amsart}

\usepackage{amsmath,amssymb,amscd}

\usepackage{hyperref}

\usepackage{comment}
\usepackage{graphicx,color}
\usepackage[all]{xy}
\usepackage{tikz}
\usepackage{tikz-cd}

\usepackage[initials]{amsrefs}

\newtheorem{theorem}{Theorem}[section]
\newtheorem{lemma}[theorem]{Lemma}
\newtheorem{proposition}[theorem]{Proposition}
\newtheorem{cor}[theorem]{Corollary}

\theoremstyle{definition}
\newtheorem{definition}[theorem]{Definition}
\newtheorem{remark}[theorem]{Remark}

\newtheorem{setup}[theorem]{Setup}


\newcommand\cE{\mathcal{E}}

\newcommand\cI{\mathcal{I}}
\newcommand\cL{\mathcal{L}}

\newcommand\cO{\mathcal{O}}
\newcommand\cT{\mathcal{T}}

\renewcommand\AA{\mathbb{A}}
\newcommand\CC{\mathbb{C}}
\newcommand\GG{\mathbb{G}}

\newcommand\PP{\mathbb{P}}

\newcommand\RR{\mathbb{R}}
\newcommand\ZZ{\mathbb{Z}}

\newcommand\rG{\mathrm{G}}
\newcommand\rH{\mathrm{H}}
\newcommand\rL{\mathrm{L}}

\newcommand\rV{\mathrm{V}}

\newcommand\rmm{\mathrm{m}}


\newcommand{\epsi}{\varepsilon}
\newcommand{\inv}{^{-1}}
\newcommand{\into}{\hookrightarrow} 
\newcommand{\onto}{\twoheadrightarrow} 
\newcommand{\Ext}{\mathrm{Ext}}
\newcommand{\cExt}{\mathcal{E}xt}
\newcommand{\cHom}{\mathcal{H}om}
\DeclareMathOperator{\Hom}{Hom}  
\DeclareMathOperator{\SSpec}{Spec}
\newcommand{\cSym}{\mathcal{S}ym}
\newcommand{\Pic}{\mathrm{Pic}}
\newcommand{\Def}{Def}
\newcommand{\art}{(\mathrm{Art})}

\DeclareMathOperator{\Spec}{Spec} 
\DeclareMathOperator{\coker}{coker}

\def\conv#1{\mathrm{conv} \left\{ #1  \right\}} 
\def\spanR#1{\mathrm{span}_\mathbb{R} \left( #1 \right)} 
\def\cone#1{\mathrm{cone} \left\{ #1 \right\}}

\title{Some examples of non-smoothable Gorenstein Fano toric threefolds}

\author{Andrea Petracci}

\address{Freie Universit\"at Berlin, Institut f\"ur Mathematik, Arnimallee 3, 14195 Berlin, Germany}

\email{andrea.petracci@fu-berlin.de}

\begin{document}

\begin{abstract}
We present a combinatorial criterion on reflexive polytopes of dimension 3 which gives a local-to-global obstruction for the smoothability of the corresponding Fano toric threefolds. As a result, we show an example of a singular Gorenstein Fano toric threefold which has compound Du Val, hence smoothable, singularities but is not smoothable.
\end{abstract}

\maketitle

\section{Introduction}

In this note we consider a specific feature of the deformation theory of Fano toric threefolds with Gorenstein singularities. Such varieties are in one-to-one correspondence with the 4319 reflexive polytopes of dimension 3, which were classified by Kreuzer and Skarke \cite{kreuzer_skarke_reflexive_3topes}. 

Fix such a polytope $P$ and denote by $X_P$ the corresponding Fano toric variety, i.e.\ the toric variety associated to the spanning fan of $P$. The singularities of $X_P$ are detected by the shape of the facets of $P$. Here we will ignore the problem of understanding which singularities are smoothable. Instead, we will present a local-to-global obstruction to the smoothability of $X_P$. In other words, we will show examples where there exists an open non-affine subscheme $Y \into X_P$ such that $Y$ is singular, $Y$ has smoothable singularities, and $Y$ is not smoothable (and consequently $X_P$ is not smoothable). These examples are constructed by means of the following combinatorial criterion --- the relevant definitions are given in \S\ref{sec:toric_An_bundles}.

\begin{theorem} \label{thm:main}
Let $P$ be a reflexive polytope of dimension $3$ and let $X_P$ be the Fano toric threefold associated to the spanning fan of $P$. If, for some integer $n \geq 1$, the polytope $P$ has ``two adjacent almost-flat $A_n$-triangles'' as facets, then $X_P$ is not smoothable.
\end{theorem}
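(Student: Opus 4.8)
The plan is to establish a local-to-global obstruction by producing an open singular subscheme whose deformation theory is rigid, and then invoking the principle (referenced in the introduction) that smoothability of $X_P$ forces smoothability of every open subscheme. Let me think carefully about what "two adjacent almost-flat $A_n$-triangles" should mean and how to exploit it.

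Let me reason through the geometry. A reflexive polytope $P$ of dimension $3$ has the origin in its interior, and $X_P$ is the toric variety of the spanning (face) fan. Each facet $F$ of $P$ is a lattice polygon at lattice distance $1$ from the origin (since $P$ is reflexive), and the cone over $F$ gives an affine toric chart of $X_P$ whose singularity is governed by the shape of $F$.

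An $A_n$-triangle should be a lattice triangle whose associated toric singularity is of type $A_n$. The cone singularity of type $A_n$ (a threefold compound Du Val singularity) arises from a specific planar triangle. "Almost-flat" presumably means the triangle is very elongated — it has a long edge and the opposite vertex is at lattice height $1$, so that the triangle looks nearly degenerate. "Two adjacent" means two such triangles share an edge (the long edge), so in $P$ we have two facets meeting along a common edge.

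Let me think about why this gives rigidity. The union of the two affine charts corresponding to the adjacent facets forms an open subscheme $Y \subseteq X_P$. Individually each chart is a $cA_n$ singularity, which is smoothable (these are compound Du Val, as the abstract notes). But the way they are glued along the shared long edge — the "almost-flat" condition — should force the first-order deformations to be obstructed, or force the space of local deformations to fail to glue, so that $Y$ itself is rigid or non-smoothable even though its singularities are individually smoothable.

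So here is the proof outline. First, I would make the combinatorial definitions precise: spell out the lattice-triangle description of an "almost-flat $A_n$-triangle" and the adjacency condition, and identify the corresponding open toric subscheme $Y \subseteq X_P$ obtained by gluing the two affine charts along the codimension-one torus orbit corresponding to the shared edge. Second, I would compute the local deformation spaces: each chart $U_i$ has unobstructed deformations (the $cA_n$ singularity is smoothable), and I would identify the first-order deformation module $T^1$ explicitly using the standard toric/combinatorial description of deformations of affine toric varieties (à la Altmann), reading off the relevant Minkowski-summand data from the triangle. Third — and this is the crux — I would analyze how these local deformations restrict to the shared orbit and attempt to glue. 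The almost-flatness of the triangles should make the two local smoothing directions \emph{incompatible} along the overlap: the gluing locus carries its own deformation constraint, and I would show that no global first-order deformation of $Y$ restricts to a smoothing on both charts simultaneously. Concretely, I expect to build a map from $T^1(Y)$ (or a suitable sheaf-level obstruction) to a torus-equivariant group and show the smoothing directions land in disjoint cosets.

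The main obstacle will be the gluing step: controlling the global deformation theory $T^1(Y)$ of the non-affine surface-over-curve (the threefold $Y$) from the two local pieces, which requires a Mayer–Vietoris / descent argument for $T^1$ and $T^2$ and careful bookkeeping of the torus weights along the shared edge. In particular I expect to need the "almost-flat" hypothesis precisely to guarantee that the relevant restriction maps on deformation spaces have the right (in)compatibility — a purely numerical condition on the lattice lengths and the height-$1$ vertices of the two triangles. Once the local-to-global incompatibility is established, non-smoothability of $Y$, and hence of $X_P$, follows from the stated principle that an open subscheme of a smoothable variety must itself be smoothable; I would phrase the final contradiction by assuming a smoothing of $X_P$, restricting it to $Y$, and deriving a global first-order smoothing of $Y$ that the combinatorial analysis has just excluded.
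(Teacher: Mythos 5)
Your overall strategy is the paper's: isolate the open toric subscheme $Y \subseteq X_P$ glued from the two charts over the adjacent triangles, show that the almost-flat condition forces every infinitesimal deformation of $Y$ to be locally trivial, and conclude that $X_P$ is not smoothable because a smoothing would restrict to a smoothing of $Y$. The first and last steps are fine. The gap is in your ``crux'' step, where the actual mechanism is left as a hoped-for incompatibility rather than identified. Your picture --- that the two local smoothing directions exist but ``land in disjoint cosets'' so that smoothings fail to glue while other deformations might survive --- is not what happens, and I do not see how to make it into a proof. The correct mechanism is a vanishing statement, not a non-gluing statement: $Y$ is an $A_n$-bundle $\pi \colon Y \to \PP^1$ (locally trivial with fibre $\Spec \CC[x,y,z]/(xy - z^{n+1})$, embedded in the total space of three line bundles $\cL_x \oplus \cL_y \oplus \cL_z$ on $\PP^1$), and the conormal sequence of that embedding gives a global two-term locally free resolution of $\Omega^1_{Y/S}$, from which one computes
\[
\pi_* \cExt^1_Y(\Omega^1_Y, \cO_Y) \simeq \bigoplus_{2 \leq j \leq n+1} \cO_{\PP^1}\bigl( -j ( \langle w_1, \rho_0 \rangle + 1 ) \bigr).
\]
Almost-flatness says $\langle w_1, \rho_0 \rangle = 0$, so every summand has negative degree and $\rH^0(Y, \cExt^1_Y(\Omega^1_Y, \cO_Y)) = 0$: there are no nonzero global sections of the sheaf of local first-order deformations at all, not merely no sections pointing in smoothing directions. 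Your proposed Mayer--Vietoris for $T^1$ is, morally, the five-term exact sequence of the local-to-global Ext spectral sequence, but without the explicit identification of this sheaf as a negative vector bundle on the singular $\PP^1$ you have no way to see where almost-flatness enters.

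A second, related gap: even granting your claim that no \emph{first-order} deformation of $Y$ smooths both charts, that alone does not preclude a one-parameter smoothing; you must control all orders. The paper does this by comparing the locally trivial deformation functor with the full one: vanishing of $\rH^0(Y, \cExt^1_Y(\Omega^1_Y, \cO_Y))$ makes the inclusion an isomorphism on tangent spaces ($\rH^1(Y,\cT_Y) \simeq \Ext^1_Y(\Omega^1_Y,\cO_Y)$) and an injection on obstruction spaces ($\rH^2(Y,\cT_Y) \into \Ext^2_Y(\Omega^1_Y,\cO_Y)$), hence a smooth morphism of functors, so \emph{every} infinitesimal deformation to every order is locally trivial and the singularities persist. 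You would need to supply this obstruction-space comparison (your passing mention of $T^2$ is not enough), or find a substitute, before the final contradiction with a smoothing of $X_P$ goes through.
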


A particular polytope, which satisfies the hypothesis of Theorem~\ref{thm:main}, allows us to prove the following result.

\begin{theorem}\label{thm:example}
There exists a singular Fano toric threefold $X$ such that the singular locus of $X$ is isomorphic to $\PP^1$, $X$ has only $cA_1$-singularities, and every infinitesimal deformation of $X$ is trivial.
In particular, $X$ is not smoothable.
\end{theorem}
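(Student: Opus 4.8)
The plan is to produce an explicit reflexive polytope $P$ of dimension $3$ that satisfies the hypotheses of Theorem~\ref{thm:main}, and to verify the geometric claims about $X = X_P$ directly from the combinatorics. Since $X$ must have $cA_1$-singularities along a $\PP^1$ and no nontrivial infinitesimal deformations, I would first search the Kreuzer--Skarke list for a polytope carrying ``two adjacent almost-flat $A_n$-triangles'' as facets with $n = 1$; the case $n = 1$ is forced by the requirement that the singularities be of type $cA_1$ rather than a higher $cA_n$. The main task is then to translate the local and global deformation data into the combinatorial/toric setting and to check the three assertions in turn.

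First I would identify the singular locus. The non-smooth points of $X_P$ come from facets of $P$ that are not unimodular triangles; for a facet which is an ``almost-flat $A_1$-triangle'' the associated affine toric chart has an $A_1$ (ordinary double point) singularity, and the corresponding torus-invariant stratum is a curve. I would show that the two adjacent $A_1$-triangles glue their singular strata into a single irreducible torus-invariant curve isomorphic to $\PP^1$ (the orbit closure of the shared edge/ray), and that away from this locus $X$ is smooth. Computing the transverse singularity type along this curve should give exactly $cA_1$, so that $X$ has compound Du Val (hence locally smoothable) singularities; this uses the defining geometry of the ``almost-flat'' condition to see that the total space is a one-parameter family of $A_1$-singularities.

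Second, and this is where I expect the real work to lie, I would prove rigidity: every infinitesimal deformation of $X$ is trivial, i.e.\ $\Ext^1(\Omega_X, \cO_X)$ (equivalently the global tangent space $\rT^1_X$ of the deformation functor) vanishes. The strategy is to assemble the global $\rT^1$ from local and sheaf-theoretic pieces via the local-to-global spectral sequence relating $\rT^i$ of the sheaves $\cT^j_X$. For a toric variety these groups are computable combinatorially from $P$: the local contributions $\rH^0(X, \cT^1_X)$ are controlled by the singularities along the $\PP^1$, while the sheafy/global contributions $\rH^1(X, \cT^0_X)$ measure deformations of the toric structure itself. The heart of Theorem~\ref{thm:main} is precisely that the ``two adjacent almost-flat $A_n$-triangles'' configuration forces a cancellation making the global $\rT^1$ vanish even though the local deformation spaces are nonzero; so here I would invoke Theorem~\ref{thm:main} and its underlying computation to conclude $\rT^1_X = 0$. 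The subtle point is that local smoothability does \emph{not} propagate to global smoothability: the obstruction is a local-to-global phenomenon, and one must verify that the combinatorial criterion genuinely applies to the chosen $P$.

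Finally, the non-smoothability conclusion follows immediately. Since a smoothing would be in particular a nontrivial (indeed generically nonsingular) deformation, and $X$ is singular but has no nontrivial infinitesimal deformations at all, $X$ cannot admit a smoothing. Thus the vanishing of $\rT^1_X$ for a singular $X$ is the decisive input, and the existence half of the statement is settled by exhibiting the one explicit polytope from the Kreuzer--Skarke classification satisfying the hypotheses of Theorem~\ref{thm:main} with $n = 1$.
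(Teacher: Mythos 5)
Your overall strategy matches the paper's: exhibit an explicit reflexive polytope with two adjacent almost-flat $A_1$-triangles, identify the singular locus as a $\PP^1$ of transverse $A_1$ (hence $cA_1$) singularities, and use the five-term exact sequence of the local-to-global spectral sequence to kill $\Ext^1_X(\Omega^1_X,\cO_X)$. However, there is a genuine gap in your rigidity step. Theorem~\ref{thm:main} and its underlying computation (Proposition~\ref{prop:toric_A_n_bundles} and Corollary~\ref{cor:zerocohomology_An_bundle_non_smoothability}) only give the vanishing of $\rH^0(X,\cExt^1_X(\Omega^1_X,\cO_X))$, which shows that every infinitesimal deformation is \emph{locally trivial} --- enough for non-smoothability, but strictly weaker than the claim that every infinitesimal deformation is \emph{trivial}. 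In the exact sequence
\begin{equation*}
0 \longrightarrow \rH^1(X,\cT_X) \longrightarrow \Ext^1_X(\Omega^1_X,\cO_X) \longrightarrow \rH^0(X,\cExt^1_X(\Omega^1_X,\cO_X))
\end{equation*}
you still need $\rH^1(X,\cT_X)=0$, and nothing in Theorem~\ref{thm:main} controls this group; your phrase ``invoke Theorem~\ref{thm:main} and its underlying computation to conclude $\rT^1_X=0$'' papers over exactly this missing input. The paper supplies it by a separate argument: $\cT_X$ is reflexive and agrees with $j_*\Omega^2_U\otimes\cO_X(-K_X)$ outside a codimension-$2$ set (where $j\colon U\into X$ is the smooth locus), so they coincide, and then Bott--Steenbrink--Danilov vanishing together with the ampleness of $-K_X$ gives $\rH^1(X,\cT_X)=0$. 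Without this (or some substitute), your argument proves only local triviality of deformations --- which still yields non-smoothability via Lemma~\ref{lemma:lci_scheme_with_no_Ext_have_locally_trivial_deformations}, but not the full statement of the theorem.

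A secondary, more minor point: you never actually produce the polytope, deferring to a search of the Kreuzer--Skarke list. The paper writes down explicit vertices, lists the six maximal cones of the spanning fan, checks that exactly two of them (plus their common face) are singular, verifies $\langle w_1,\rho_0\rangle=0$, and computes $\cExt^1_X(\Omega^1_X,\cO_X)\simeq\cO_C(-2)$ via Proposition~\ref{prop:toric_A_n_bundles}. You would need to carry out these verifications for your chosen polytope; in particular one must check that the remaining facets are unimodular so that the singular locus is exactly the one $\PP^1$ and the singularities are exactly $cA_1$.
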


This refutes a conjecture made by Prokhorov \cite[Conjecture~1.9]{prokhorov_degree_Fano_threefolds}, according to which all Fano threefolds with only compound Du Val singularities are smoothable. This conjecture was motivated by Namikawa's result \cite{namikawa_fano} on the smoothability of Fano threefolds with Gorenstein terminal singularities.

\subsection*{Idea of the proof of Theorem~\ref{thm:main}} Fix an integer $n \geq 1$. An $A_n$-triangle (see Definition~\ref{def:A_n_triangle}) corresponds, via toric geometry, to the $cA_n$ threefold singularity
$
\Spec \CC[x,y,z,w]/(xy-z^{n+1}).
$

If a reflexive polytope $P$ of dimension $3$ has two adjacent $A_n$-triangles as facets, then there is an open non-affine toric subscheme $Y$ of $X_P$ such that the singular locus of $Y$ is isomorphic to $\PP^1$ and the singularities are transverse $A_n$. Here $A_n$ denotes the affine toric surface $
\Spec \CC[x,y,z]/(xy-z^{n+1})$.
 More precisely, $Y$ is an $A_n$-bundle over $\PP^1$ (see Definition~\ref{def:A_n_bundle}), i.e.\ there exists a map $\pi \colon Y \to \PP^1$ such that, Zariski locally on the target, it is the trivial projection with fibre $A_n$.
The map $\pi$ may be globally non-trivial, depending on the relative position of the two adjacent $A_n$-triangles. 
It is possible to express the sheaf $\pi_* \cExt^1_Y(\Omega^1_Y, \cO_Y)$, which is a vector bundle on $\PP^1$ of rank $n$, in terms of the combinatorics of the two triangles.
In particular, we get to know when this sheaf is the direct sum of negative line bundles on $\PP^1$. This gives a combinatorial condition for $\cExt^1_Y(\Omega^1_Y, \cO_Y)$ not to have global sections; the condition is expressed by insisting that the two triangles almost lie on the same plane, i.e.\ they are ``almost-flat'' (see Definition~\ref{def:two_adjacent_A_n_triangles}). If this happens, then every infinitesimal deformation of $Y$ is locally trivial and, thus, $X_P$ is not smoothable.

\subsection*{Relation to Mirror Symmetry for Fano varieties}
In the context of Mirror Symmetry for Fano varieties \cite{mirror_symmetry_and_fano_manifolds, conjectures}, Akhtar--Coates--Galkin--Kasprzyk \cite{sigma} introduced the notion of ``mutation''.
Starting from some combinatorial datum, a mutation transforms a Fano polytope (i.e.\ the lattice polytope associated to a Fano toric variety) into another Fano polytope.
Varying the combinatorial datum gives different mutations of the same Fano polytope.

In the setting of Theorem~\ref{thm:main}, if a 3-dimensional  reflexive polytope $P$ has two adjacent $A_n$-triangle facets ($n \geq 1$), then these are almost-flat  if and only if the polytope $P$ does not admit a special kind of mutation, which we will not specify here. Therefore, Theorem~\ref{thm:main} says that, in some cases, a Gorenstein Fano toric threefold is not smoothable if the corresponding polytope does not admit a special kind of mutation. This agrees with Ilten's observation \cite{ilten_sigma} that mutations of Fano polytopes induce deformations of the corresponding Fano toric varieties.

\subsection*{Higher dimensions}
The methods of this paper could be easily adapted to study obstructions to deformations of toric $A_n$-bundles on smooth toric varieties of any dimension. This would give a local-to-global obstruction to the smoothability of toric varieties of dimension $d \geq 4$ which contain, as an open toric subscheme, a toric $A_n$-bundle over a smooth toric variety of dimension $d-2$.

\subsection*{Notation and conventions}

We work over $\CC$, but everything will hold over a field of characteristic zero or over a perfect field of large characteristic.
If $N$ is a lattice, its dual is denoted by $M := \Hom_\ZZ(N, \ZZ)$ and the symbol $\langle \cdot , \cdot \rangle$ denotes the duality pairing between $M$ and $N$.

\subsection*{Acknowledgements} 
The results in this note have appeared in my Ph.D.\ thesis~\cite{petracci_phd}, which was supervised by Alessio Corti; I would like to thank him for suggesting this problem to me and for sharing his ideas. I am grateful to Victor  Przyjalkowski for bringing Prokhorov's conjecture to my attention.

The author was funded by Tom Coates' ERC Consolidator Grant~682603 and by Alexander Kasprzyk's EPSRC Fellowship~EP/N022513/1.

\section{$A_n$-bundles and their deformations} \label{sec:deformations_A_n_bundles}

For any integer $n \geq 1$, let $A_n$ denote the toric surface singularity associated to the cone
spanned by $(0,1)$ and $(n+1,1)$
inside the lattice $\ZZ^2$, i.e.\ the affine hypersurface
\[
A_n = \Spec \CC [ x,y,z] / (xy-z^{n+1}).
\]
The conormal sequence of the closed embedding $A_n \into \AA^3$ produces a free resolution of $\Omega_{A_n}^1$:
\begin{equation} \label{eq:conormal_sequence_A_n}
0 \longrightarrow I/I^2 = \cO_{A_n} \overset{\begin{pmatrix} y \\ x \\ -(n+1)z^n \end{pmatrix}}{\xrightarrow{\hspace*{2.5cm}}} \left. \Omega^1_{\AA^3} \right\vert_{A_n} = \cO_{A_n}^{\oplus 3} \longrightarrow \Omega^1_{A_n} \longrightarrow 0
\end{equation}
where $I$ is the ideal of $A_n$ in $\AA^3$.
This allows us to compute
\begin{align*}
\Ext^1_{A_n}(\Omega^1_{A_n}, \cO_{A_n}) = \coker \left( \cO_{A_n}^{\oplus 3} \overset{(y,x,-(n+1)z^n)}
{\xrightarrow{\hspace*{19mm}}} \cO_{A_n} \right) 
= \cO_{A_n} / (y,x, z^n) 
= \cO_{D_n}
\end{align*}
where $D_n \simeq \Spec \CC[z]/(z^n)$ is the closed subscheme of $A_n$ defined by the ideal generated by $y$, $x$ and $z^n$. Notice that $D_n$ is the singular locus of $A_n$ equipped with the schematic structure given by the second Fitting ideal of $\Omega^1_{A_n}$.

We want to define the notion of an $A_n$-bundle and globalise this computation of the Ext group.
Informally, an $A_n$-bundle is a morphism $Y \to S$ which, Zariski-locally, is the projection $A_n \times S \to S$. More precisely we have to insist that an $A_n$-bundle is a closed subscheme in a split vector bundle over $S$ of rank 3.

\begin{definition} \label{def:A_n_bundle}
An \emph{$A_n$-bundle} over a $\CC$-scheme $S$ is a morphism of schemes $\pi_Y \colon Y \to S$ such that there exist three line bundles $\cL_x, \cL_y, \cL_z \in \Pic(S)$, a closed embedding of $S$-schemes
\[
\iota \colon Y \into E = \SSpec_{S} \cSym^\bullet_{\cO_S} (\cL_x \oplus \cL_y \oplus \cL_z)^\vee
\]
of $Y$ into the total space of $\cL_x \oplus \cL_y \oplus \cL_z$, and an 
affine open cover $\{ S_i \}_i$ of $S$ satisfying the following condition: 
for each $i$, 
there are trivializations $\cL_x \vert_{S_i} \simeq \cO_{S_i}$, $\cL_y \vert_{S_i} \simeq \cO_{S_i}$, $\cL_z \vert_{S_i} \simeq \cO_{S_i}$ and a commutative
 diagram of $S_i$-schemes
 \begin{equation*}
\begin{tikzcd}
\pi_Y\inv (S_i) 
\arrow[r, "\simeq"]
\arrow[d, hook, "\iota_{S_i}"'] 
&
 \Spec \cO_{S_i}(S_i)[x_i,y_i,z_i] / (x_i y_i - z_i^{n+1}) 
 \arrow[d, hook] 
 \\
\pi_E\inv (S_i)
 \arrow[r, "\simeq"]
  &
   \Spec \cO_{S_i}(S_i) [x_i, y_i, z_i] = \AA^3_{S_i}
\end{tikzcd}
\end{equation*}
where $\pi_E$ denotes the projection $E \to S$, the coordinates $x_i \in \Gamma(S_i, \cL_x^\vee)$, $y_i \in \Gamma(S_i, \cL_y^\vee)$ and $z_i \in \Gamma(S_i, \cL_z^\vee)$ are the local sections corresponding to the trivializations above, the horizontal arrows are isomorphisms, the left vertical arrow is the restriction of the closed embedding $\iota \colon Y \into E$, and the right vertical arrow is the base change of the standard embedding $A_n \into \AA^3$ to $S_i$.
\end{definition}

\begin{remark} \label{rmk:cocycle_condition_A_n_bundles}
A posteriori one can see that $\cL_x \otimes \cL_y \simeq \cL_z^{\otimes (n+1)}$. This follows from the following easy fact in commutative algebra: let $A$ be a ring and $f \in A$ be an invertible element; if the ideal of $A[x,y,z]$ generated by $xy-z^{n+1}$ coincides with the ideal generated by $xy - f z^{n+1}$, then $f = 1$.
\end{remark}

\begin{lemma} \label{lemma:thickening_zero_section_line_bundle}
Let $S$ be a scheme with a line bundle $\cL \in \Pic(S)$. Let $D$ be the $k$-th  order thickening of the zero section of the total space of $\cL$, i.e.\ the closed subscheme of $\SSpec_S \cSym^{\bullet}_{\cO_S} \cL^\vee$ locally defined by the equation $x^{k+1} = 0$ where $x$ is a nowhere vanishing local section of $\cL^\vee$. Let $\pi \colon D \to S$ be the projection. Then
\begin{equation*}
\pi_* \cO_D = \bigoplus_{i=0}^k (\cL^\vee)^{\otimes i}.
\end{equation*}
\end{lemma}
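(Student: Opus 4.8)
The plan is to work on the total space $E = \SSpec_S \cSym^\bullet_{\cO_S} \cL^\vee$, with projection $\pi_E \colon E \to S$, and to realize $D$ as the closed subscheme cut out by a graded ideal, so that the computation reduces to a quotient of graded pieces. First I would record that, since $\cL^\vee$ is a line bundle, its symmetric powers coincide with its tensor powers, so that $(\pi_E)_* \cO_E = \cSym^\bullet_{\cO_S} \cL^\vee = \bigoplus_{i \geq 0} (\cL^\vee)^{\otimes i}$ as $\cO_S$-modules, the $i$-th summand being the degree-$i$ part of the symmetric algebra.

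Next I would describe the ideal sheaf $\cI_D$ of $D$ in $E$. On an open set $U \subseteq S$ over which $\cL^\vee$ is trivialized by a nowhere vanishing section $x$, the subscheme $D$ is cut out by $x^{k+1}$, so the ideal restricts to $(x^{k+1}) = \bigoplus_{i \geq k+1} (\cL^\vee)^{\otimes i}\vert_U$. The key point is that these local ideals glue to the intrinsic graded subsheaf $\bigoplus_{i \geq k+1} (\cL^\vee)^{\otimes i}$ of $\cSym^\bullet_{\cO_S} \cL^\vee$: on an overlap the two trivializing sections differ by a unit (a transition function of $\cL^\vee$), hence their $(k+1)$-st powers generate the same ideal, and tensoring by $(\cL^\vee)^{\otimes j}$ is an isomorphism onto $(\cL^\vee)^{\otimes(k+1+j)}$ because $\cL^\vee$ is invertible. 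Thus $(\pi_E)_* \cI_D = \bigoplus_{i \geq k+1} (\cL^\vee)^{\otimes i}$, a well-defined $\cO_S$-submodule of $(\pi_E)_* \cO_E$ independent of the chosen trivializations.

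Finally, writing $\iota \colon D \into E$ for the closed immersion and $\pi = \pi_E \circ \iota$, I would push forward the short exact sequence $0 \to \cI_D \to \cO_E \to \iota_* \cO_D \to 0$ along the affine morphism $\pi_E$; since $\pi_E$ is affine, $(\pi_E)_*$ is exact on quasi-coherent sheaves, giving
\[
\pi_* \cO_D = (\pi_E)_* \cO_E \big/ (\pi_E)_* \cI_D = \Big( \bigoplus_{i \geq 0} (\cL^\vee)^{\otimes i} \Big) \big/ \Big( \bigoplus_{i \geq k+1} (\cL^\vee)^{\otimes i} \Big) = \bigoplus_{i=0}^{k} (\cL^\vee)^{\otimes i},
\]
as desired.

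The main obstacle is the gluing step: checking that the locally defined ideals $(x^{k+1})$ patch together to the intrinsic graded subsheaf $\bigoplus_{i \geq k+1}(\cL^\vee)^{\otimes i}$, rather than merely to an abstractly isomorphic sheaf, and that the resulting quotient is graded summand by summand. Everything else --- the identification of symmetric with tensor powers of a line bundle, and the exactness of pushforward along an affine morphism --- is formal.
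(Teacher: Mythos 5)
Your argument is correct and coincides with the paper's own proof: the paper gives exactly this identification $D = \SSpec_S (\cSym^\bullet_{\cO_S} \cL^\vee)/\cI$ with $\cI$ the graded ideal of elements of degree greater than $k$ (as its ``another way to see this''), alongside an equivalent local-frame computation on a trivializing cover. No gaps; the gluing step you flag is handled correctly by the observation that the trivializing sections differ by a unit.
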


\begin{proof}
Let $\{ S_i \}_i$ be an affine open cover of $S$ which trivializes $\cL$. Let $x_i \in \Gamma(S_i, \cL^\vee)$ be a local coordinate. Then we have the isomorphism of $S_i$-schemes
\[
\pi\inv(S_i) \simeq \Spec \cO_S(S_i)[x_i]/(x_i^{k+1}).
\]
Therefore $\pi_* \cO_D \vert_{S_i}$ is the free $\cO_{S_i}$-module with basis $\{1, x_i, \dots, x_i^k \}$, which is a local frame of $\cO_S \oplus \cL^\vee \oplus \cdots \oplus (\cL^\vee)^{\otimes k}$.

Another way to see this is to notice that $D = \SSpec_S (\cSym^\bullet_{\cO_S} \cL^\vee) / \cI$, and consequently $\pi_* \cO_D = (\cSym^\bullet_{\cO_S} \cL^\vee ) / \cI$, where $\cI \subseteq \cSym^\bullet_{\cO_S} \cL^\vee$ is the ideal made up of elements of degree greater than $k$.
\end{proof}

\begin{proposition} \label{prop:pushforward_ext_sheaf_A_n_bundle}
Let $S$ be a $\CC$-scheme and $\pi_Y \colon Y \to S$ be an $A_n$-bundle, with $\cL_x, \cL_y, \cL_z \in \Pic(S)$ as in Definition~\ref{def:A_n_bundle}. Then there is an isomorphism of $\cO_S$-modules
\begin{equation*}
(\pi_Y)_* \left( \cExt^1_Y(\Omega^1_{Y/S}, \cO_Y) \right) \simeq \bigoplus_{2 \leq j \leq n+1} \cL_z^{\otimes j}.
\end{equation*}
\end{proposition}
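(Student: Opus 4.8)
The plan is to globalise the local computation surrounding \eqref{eq:conormal_sequence_A_n}, keeping careful track of the twists by $\cL_x,\cL_y,\cL_z$, and then to push the resulting Ext sheaf forward along $\pi_Y$ using Lemma~\ref{lemma:thickening_zero_section_line_bundle}. First I would set up a global twisted conormal resolution. Let $\cI\subseteq\cO_E$ be the ideal of $Y$ in $E$. On each $\pi_Y\inv(S_i)$ the closed embedding $\iota$ is the base change of $A_n\into\AA^3$, so the conormal sequence there is the pullback of \eqref{eq:conormal_sequence_A_n} along the (flat) projection $A_n\times S_i\to A_n$, hence exact; since exactness is local, the global sequence of $\cO_Y$-modules
\begin{equation*}
0\longrightarrow\cI/\cI^2\overset{d}{\longrightarrow}\iota^*\Omega^1_{E/S}\longrightarrow\Omega^1_{Y/S}\longrightarrow0
\end{equation*}
is a locally free resolution of $\Omega^1_{Y/S}$. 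Because $E$ is the total space of $\cL_x\oplus\cL_y\oplus\cL_z$, one has $\iota^*\Omega^1_{E/S}\simeq\pi_Y^*(\cL_x^\vee\oplus\cL_y^\vee\oplus\cL_z^\vee)$ with local frames $dx_i,dy_i,dz_i$, while $\cI$ is locally generated by $f_i=x_iy_i-z_i^{n+1}$.

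The key point is the twist of the conormal line bundle. On $S_i\cap S_j$ the defining equations satisfy $f_i=\lambda_{ij}f_j$, where $\lambda_{ij}$ is precisely the transition cocycle of $\cL_x^\vee\otimes\cL_y^\vee=(\cL_z^\vee)^{\otimes(n+1)}$ (this is exactly the compatibility that makes $Y$ glue, cf.\ Remark~\ref{rmk:cocycle_condition_A_n_bundles}). Since the classes $[f_i]$ are local frames of $\cI/\cI^2$, it follows that
\begin{equation*}
\cI/\cI^2\simeq\pi_Y^*(\cL_x^\vee\otimes\cL_y^\vee)=\pi_Y^*\cL_z^{\otimes-(n+1)},\qquad\text{so}\qquad(\cI/\cI^2)^\vee\simeq\pi_Y^*\cL_z^{\otimes(n+1)}.
\end{equation*}

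Next I would dualise. As $\iota^*\Omega^1_{E/S}$ is locally free, applying $\cHom_{\cO_Y}(-,\cO_Y)$ to the resolution gives $\cExt^1_Y(\Omega^1_{Y/S},\cO_Y)=\coker(d^\vee)$, where $d^\vee\colon\pi_Y^*(\cL_x\oplus\cL_y\oplus\cL_z)\to(\cI/\cI^2)^\vee$ is given in the local frames by the row $(y_i,x_i,-(n+1)z_i^n)$, exactly as in the computation following \eqref{eq:conormal_sequence_A_n}. Its image is the subsheaf of $(\cI/\cI^2)^\vee$ cut out by the ideal of the closed subscheme $\cD_n\subseteq Y$ defined locally by $x=y=z^n=0$ (the globalisation of $D_n$), so that
\begin{equation*}
\cExt^1_Y(\Omega^1_{Y/S},\cO_Y)\simeq(\cI/\cI^2)^\vee\otimes_{\cO_Y}\cO_{\cD_n}=\pi_Y^*\cL_z^{\otimes(n+1)}\big\vert_{\cD_n}.
\end{equation*}
Now $\{x=y=0\}$ is the total space of the sub-line-bundle $\cL_z\subseteq\cL_x\oplus\cL_y\oplus\cL_z$, and $z^n=0$ cuts out the $(n-1)$-th order thickening of its zero section; thus $\cD_n$ is exactly the thickening to which Lemma~\ref{lemma:thickening_zero_section_line_bundle} applies, with $\cL=\cL_z$ and $k=n-1$. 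Writing $\pi\colon\cD_n\to S$ for the restriction of $\pi_Y$ and combining the projection formula with the Lemma,
\begin{align*}
(\pi_Y)_*\cExt^1_Y(\Omega^1_{Y/S},\cO_Y)
&=\cL_z^{\otimes(n+1)}\otimes\pi_*\cO_{\cD_n}
=\cL_z^{\otimes(n+1)}\otimes\bigoplus_{i=0}^{n-1}(\cL_z^\vee)^{\otimes i}\\
&=\bigoplus_{i=0}^{n-1}\cL_z^{\otimes(n+1-i)}
=\bigoplus_{2\leq j\leq n+1}\cL_z^{\otimes j},
\end{align*}
which is the claimed isomorphism.

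The step I expect to be the main obstacle is pinning down the twists, and above all the sign in $\cI/\cI^2\simeq\pi_Y^*\cL_z^{\otimes-(n+1)}$ rather than its dual. The delicate point is that the coordinates $x_i,y_i,z_i$ are local frames of the \emph{dual} bundles $\cL_x^\vee,\cL_y^\vee,\cL_z^\vee$, so the defining equation $f_i$ transforms as a frame (not as the components of a section) of $\cL_x^\vee\otimes\cL_y^\vee$; reversing this convention would turn the final answer into negative powers $\cL_z^{\otimes-(n+1)},\dots,\cL_z^{\otimes-2n}$ and destroy the match with the statement. Once the conormal twist is fixed, everything else is a mechanical dualisation together with the pushforward Lemma.
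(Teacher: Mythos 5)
Your proposal is correct and follows essentially the same route as the paper's proof: the global left-exact conormal sequence of $Y \into E \to S$, the identification $\cI/\cI^2 \simeq \pi_Y^*(\cL_x\otimes\cL_y)^\vee = \pi_Y^*\cL_z^{\otimes -(n+1)}$ via the transition cocycle of the local equations, dualisation to exhibit $\cExt^1_Y(\Omega^1_{Y/S},\cO_Y)$ as $\pi_Y^*\cL_z^{\otimes(n+1)}$ restricted to the thickened zero section of $\cL_z$, and then Lemma~\ref{lemma:thickening_zero_section_line_bundle} with the projection formula. The sign/duality convention you flag as the delicate point is handled exactly as in the paper (the coordinates are frames of the dual bundles), so there is nothing to correct.
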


\begin{proof}
Assume we are in the setting of Definition~\ref{def:A_n_bundle}, with projections $\pi_Y \colon Y \to S$ and $\pi_E \colon E \to S$, closed embedding $\iota \colon Y \into E$, and a trivialising affine open cover $\{ S_i \}_i$ of $S$ with local sections $x_i, y_i, z_i$.

We consider the conormal sequence of $Y \overset{\iota}{\into} E \overset{\pi_E}\rightarrow S$:
\begin{equation} \label{eq:conormal_sequence_S}
\cI_{Y/E} / \cI_{Y/E}^2 \longrightarrow \Omega^1_{E/S} \vert_Y \longrightarrow \Omega^1_{Y/S} \longrightarrow 0,
\end{equation}
where $\cI_{Y/E}$ is the ideal sheaf of the closed embedding $\iota \colon Y \into E$.
We restrict this sequence to $S_i$ and we get the conormal sequence of $Y_i = \pi_Y\inv (S_i) \overset{\iota_{S_i}}{\into} E_i = \pi_E\inv(S_i) \rightarrow S_i$:
\begin{equation} \label{eq:conormal_sequence_S_i}
\cI_{Y_i/E_i} / \cI_{Y_i/E_i}^2 \longrightarrow \Omega^1_{E_i/S_i} \vert_{Y_i} \longrightarrow \Omega^1_{Y_i/S_i} \longrightarrow 0;
\end{equation}
this is the base change to $S_i$ of \eqref{eq:conormal_sequence_A_n}, the conormal sequence of $A_n \into \AA^3 \to \Spec \CC$. As $S_i \to \Spec \CC$ is flat, we have that \eqref{eq:conormal_sequence_S_i} is left exact for all $i$. As $\{ S_i \}_i$ is an open cover of $S$, we have that also \eqref{eq:conormal_sequence_S} is left exact.

Since $\pi_E \colon E \to S$ is the vector bundle whose sheaf of sections is $\cL_x \oplus \cL_y \oplus \cL_z$, we have that $\Omega^1_{E/S} = \pi_E^* (\cL_x \oplus \cL_y \oplus \cL_z)^\vee$. Therefore $\Omega^1_{E/S} \vert_Y = \pi_Y^* (\cL_x \oplus \cL_y \oplus \cL_z)^\vee$.

One can check that $\cI_{Y/E} / \cI_{Y/E}^2 \simeq \pi_Y^* (\cL_x \otimes \cL_y)^\vee$. On the intersection $S_{ij} = S_i \cap S_j$ we have the equalities 
$ x_i = g_{ij}^x x_j$,
  $y_i = g_{ij}^y y_j$, and
   $z_i = g_{ij}^z z_j$,
where $g_{ij}^x, g_{ij}^y, g_{ij}^z \in \Gamma(S_{ij}, \cO_S^*)$ are invertible functions such that $g_{ij}^x g_{ij}^y = (g_{ij}^z)^{n+1}$ (by Remark~\ref{rmk:cocycle_condition_A_n_bundles}). Then the restriction of the map
\[
\pi_Y^* (\cL_x \otimes \cL_y)^\vee = \cI_{Y/E} / \cI_{Y/E}^2 \longrightarrow \Omega^1_{E/S} \vert_Y = \pi_Y^* (\cL_x \oplus \cL_y \oplus \cL_z)^\vee
\]
in \eqref{eq:conormal_sequence_S} to $Y_{ij} = \pi_Y\inv (S_{ij})$ produces the following commutative diagram.
\begin{equation*}
\begin{tikzcd}
\cO_{Y_{ij}}  \arrow{d}[swap]{g_{ij}^x g_{ij}^y}       \arrow{rr}{\begin{pmatrix}
y_i \\ x_i \\ -(n+1) z_i^n
\end{pmatrix}}   &    &
\cO_{Y_{ij}}^{\oplus 3}   \arrow[d, "\mathrm{diag}(g_{ij}^x{,}g_{ij}^y{,}g_{ij}^z)"]    \\
\cO_{Y_{ij}}      \arrow{rr}[swap]{\begin{pmatrix}
y_j \\ x_j \\ -(n+1) z_j^n
\end{pmatrix}}     &  &
\cO_{Y_{ij}}^{\oplus 3}     
\end{tikzcd}       
\end{equation*}
Therefore the sequence \eqref{eq:conormal_sequence_S} becomes
\begin{equation*}
0 \longrightarrow \pi_Y^* (\cL_x \otimes \cL_y)^\vee \longrightarrow \pi_Y^* (\cL_x \oplus \cL_y \oplus \cL_z)^\vee \longrightarrow \Omega^1_{Y/S} \longrightarrow 0,
\end{equation*}
which gives a locally free resolution of $\Omega^1_{Y/S}$. Hence
\begin{align*}
\cExt^1_Y(\Omega^1_{Y/S}, \cO_Y) &= \coker \left( \pi_Y^* ( \cL_x \oplus \cL_y \oplus \cL_z) \longrightarrow \pi_Y^* (\cL_x \otimes \cL_y) \right) \\
&= \pi_Y^* (\cL_x \otimes \cL_y) \otimes_{\cO_Y} \cO_D \\
&= \pi_Y^* (\cL_z)^{\otimes (n+1)} \otimes_{\cO_Y} \cO_D
\end{align*}
where $D \into Y$ is the closed subscheme locally defined by $x_i = y_i = z_i^n = 0$.
Denote by $\pi_D \colon D \to S$ the projection. It is clear that $D$ is the $(n-1)$-th order thickening of the zero section in the total space $\cL_z$ over $S$. By Lemma~\ref{lemma:thickening_zero_section_line_bundle} we have
\begin{equation*}
(\pi_D)_* \cO_D = \bigoplus_{i=0}^{n-1} (\cL_z^\vee)^{\otimes i}.
\end{equation*}

Thus
\begin{align*}
(\pi_Y)_* \cExt^1_Y (\Omega^1_{Y/S}, \cO_Y) &= (\pi_Y)_* (\pi_Y^* \cL_z^{\otimes (n+1)} \otimes_{\cO_Y} \cO_D) \\
&= (\pi_D)_* (\pi_D^* \cL_z^{\otimes (n+1)}) \\
&= (\pi_D)_* \cO_D \otimes_{\cO_S} \cL_z^{\otimes (n+1)} \\
&= \bigoplus_{i=0}^{n-1} (\cL_z^\vee)^{\otimes i} \otimes_{\cO_S} \cL_z^{\otimes (n+1)} \\
&= \bigoplus_{2 \leq j \leq n+1} \cL_z^{\otimes j}.
\end{align*}
This concludes the proof of Proposition~\ref{prop:pushforward_ext_sheaf_A_n_bundle}.
\end{proof}

The following lemma is well known in deformation theory.

\begin{lemma} \label{lemma:lci_scheme_with_no_Ext_have_locally_trivial_deformations}
Let $Y$ be a reduced $\CC$-scheme. Assume that $Y \to \Spec \CC$ is a local complete intersection morphism and that $\rH^0(Y, \cExt^1_Y(\Omega^1_Y, \cO_Y))=0$.

Then all infinitesimal deformations of $Y$ are locally trivial. In particular, if $Y$ is not smooth, then $Y$ is not smoothable.
\end{lemma}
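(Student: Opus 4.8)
\subsection*{Proof proposal}

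The plan is to derive the statement from the local-to-global principle of deformation theory (in the form of the Lichtenbaum--Schlessinger cotangent complex formalism), by comparing the functor $\Def_Y$ of all infinitesimal deformations of $Y$ with its subfunctor $\Def^{\mathrm{lt}}_Y \into \Def_Y$ of locally trivial deformations, and showing that this inclusion is an isomorphism. Recall that $\Def^{\mathrm{lt}}_Y$ is governed by the tangent sheaf $\cT_Y := \cHom_Y(\Omega^1_Y, \cO_Y)$, with tangent space $\rH^1(Y, \cT_Y)$ and obstructions in $\rH^2(Y, \cT_Y)$, while $\Def_Y$ is governed by the cotangent complex $\mathbb{L}_Y$, with tangent space $T^1_Y := \Ext^1_Y(\mathbb{L}_Y, \cO_Y)$. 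The lci hypothesis on $Y \to \Spec \CC$ is what makes the comparison clean: $\mathbb{L}_Y$ is perfect of tor-amplitude $[-1,0]$, so the local cotangent sheaves $\cExt^i_Y(\mathbb{L}_Y, \cO_Y)$ vanish for $i \geq 2$ (there are no higher local obstructions) and $\cExt^1_Y(\mathbb{L}_Y, \cO_Y) = \cExt^1_Y(\Omega^1_Y, \cO_Y)$, the sheaf appearing in the hypothesis. I will prove $\Def^{\mathrm{lt}}_Y = \Def_Y$ by induction on the length of the Artinian base in $\art$.

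For the base case, the five-term exact sequence of the local-to-global spectral sequence $\rH^p(Y, \cExt^q_Y(\mathbb{L}_Y, \cO_Y)) \Rightarrow \Ext^{p+q}_Y(\mathbb{L}_Y, \cO_Y)$ gives $0 \to \rH^1(Y, \cT_Y) \to T^1_Y \to \rH^0(Y, \cExt^1_Y(\Omega^1_Y, \cO_Y))$. Since the right-hand term vanishes by hypothesis, the edge map $\rH^1(Y, \cT_Y) \to T^1_Y$ is an isomorphism; that is, every first-order deformation of $Y$ is locally trivial.

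For the inductive step, let $0 \to J \to A' \to A \to 0$ be a small extension in $\art$ and let $\mathcal{Y}'$ be a deformation of $Y$ over $A'$. By the inductive hypothesis its restriction $\mathcal{Y}$ over $A$ is locally trivial, so there is an affine open cover $\{ U_i \}$ of $Y$ with trivialisations $\phi_i \colon \mathcal{Y}\vert_{U_i} \xrightarrow{\sim} U_i \times \Spec A$. For each $i$ the deformation $\mathcal{Y}'\vert_{U_i}$ over $A'$ lifts the trivial deformation of $U_i$ over $A$ (via $\phi_i$), and its class relative to the trivial lift is an element $c_i \in T^1_{U_i} \otimes_\CC J$ which vanishes exactly when $\mathcal{Y}'\vert_{U_i}$ is trivial; since $U_i$ is affine and lci, $T^1_{U_i} = \rH^0(U_i, \cExt^1_Y(\Omega^1_Y, \cO_Y))$. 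I expect the \emph{main obstacle} to be the verification that these locally-defined classes $c_i$ are compatible on overlaps and hence patch to a single global section. The key point resolving it is that on $U_{ij}$ the two trivialisations differ by an automorphism of the trivial deformation over $A$, and such automorphisms always lift along the small extension (the map $\rH^0(U_{ij}, \cT_Y) \otimes \mathfrak{m}_{A'} \to \rH^0(U_{ij}, \cT_Y) \otimes \mathfrak{m}_A$ is surjective because $A' \onto A$ is); therefore $c_i\vert_{U_{ij}} = c_j\vert_{U_{ij}}$. The $c_i$ thus glue to a global section of $\cExt^1_Y(\Omega^1_Y, \cO_Y) \otimes_\CC J$, which must be zero by hypothesis, so every $c_i = 0$, every $\mathcal{Y}'\vert_{U_i}$ is trivial, and $\mathcal{Y}'$ is locally trivial. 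This closes the induction, proving that all infinitesimal deformations of $Y$ are locally trivial.

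Finally, for the smoothability assertion, observe that in a locally trivial deformation the Zariski-local structure of $Y$ is unchanged: if $Y$ is singular at a point $p$, then the total space of any locally trivial deformation is, near $p$, a product $U \times \Spec A$ with $U \ni p$ singular, hence is singular along the locus over $p$ and cannot have a smooth general fibre. Since every infinitesimal deformation of $Y$ is locally trivial, the same holds for the formal deformation attached to any algebraic family with special fibre $Y$ (here the reducedness of $Y$ ensures the singular locus is proper and that ``smoothing'' is meaningful), so no such family smooths $Y$. Therefore a non-smooth $Y$ is not smoothable.
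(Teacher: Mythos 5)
Your argument is correct in substance, and it takes a genuinely different route from the paper's. The paper argues functorially: it introduces the subfunctor $\Def_Y' \into \Def_Y$ of locally trivial deformations and proves the inclusion is a \emph{smooth} morphism of functors on $\art$ via the standard criterion (bijective on tangent spaces, injective on obstruction spaces), reading both comparisons off the same five-term exact sequence you use in your base case; this forces it to identify the obstruction spaces $\rH^2(Y,\cT_Y)$ and $\Ext^2_Y(\Omega^1_Y,\cO_Y)$ (the latter needing the reduced lci hypotheses, with citations to Sernesi and Vistoli) and to invoke Manetti's smoothness criterion. You instead inline the induction on small extensions: local lifts over $A'$ of the (inductively locally trivial) deformation over $A$ form a torsor under $\rH^0(U_i,\cExt^1_Y(\Omega^1_Y,\cO_Y))\otimes_\CC J$, the difference classes glue because automorphisms of a product deformation lift along small extensions, and the resulting global section of $\cExt^1_Y(\Omega^1_Y,\cO_Y)\otimes_\CC J$ dies by hypothesis. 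What this buys is that obstruction spaces never appear --- only the degree-one part of the cotangent complex is needed --- at the price of verifying the torsor and gluing facts by hand. Two points you should make explicit: the identification $\cExt^1_Y(\mathbb{L}_Y,\cO_Y)=\cExt^1_Y(\Omega^1_Y,\cO_Y)$ uses reducedness (to kill $H^{-1}(\mathbb{L}_Y)$, i.e.\ to make the conormal sequence left exact), not the lci hypothesis alone; and the surjectivity parenthesis in your gluing step tacitly identifies the automorphism group of the trivial deformation with exponentials of $\rH^0(U_{ij},\cT_Y)\otimes\mathfrak{m}_A$, which is a standard but citable fact. Your closing paragraph on non-smoothability is only a sketch, but the paper's proof is entirely silent on that clause, so you are not behind it there.
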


\begin{proof}
Let $\art$ be the category of local artinian $\CC$-algebras with residue field $\CC$.
Let $\Def_Y$ be the functor of infinitesimal deformations of $Y$, i.e.\ the covariant functor from $\art$ to the category of sets which maps each $A \in \art$ to the set $\Def_Y(A)$ of isomorphism classes of deformations of $Y$ over $\Spec A$ and acts on arrows by base change.
For every $A \in \art$, let $\Def_Y'(A)$ be the subset of $\Def_Y(A)$ made up of the locally trivial deformations. This gives a subfunctor $\phi \colon \Def_Y' \into \Def_Y$. We refer the reader to \cite[\S 2.4]{sernesi_deformation} or to \cite{manetti_seattle} for details.

We want to show that the natural transformation $\phi$ is an isomorphism.
It is enough to show that the injective function $\phi_A \colon \Def_Y'(A) \into \Def_Y(A)$ is surjective for every $A \in \art$.
This is implied by the smoothness of $\phi$ (see \cite[Definition~3.9]{manetti_seattle}). This is what we will prove below.

Let $\cT_Y = \cHom_Y(\Omega^1_Y, \cO_Y)$ be the sheaf of derivations on $Y$. By \cite[Theorem 2.4.1]{sernesi_deformation} the tangent space of $\Def_Y'$ is $\rH^1(Y, \cT_Y)$ and the tangent space of $\Def_Y$ is $\Ext^1_Y(\Omega^1_Y, \cO_Y)$. 
By \cite[Proposition 2.4.6]{sernesi_deformation}, $\rH^2(Y, \cT_Y)$ is an obstruction space for $\Def_Y'$. 
By \cite[Proposition~2.4.8]{sernesi_deformation} or \cite[Theorem 4.4]{vistoli_deformation_lci}, $\Ext^2_Y(\Omega^1_Y, \cO_Y)$ is an obstruction space for $\Def_Y$.

The local-to-global spectral sequence for Ext gives the following five term exact sequence
\begin{align*}
0 &\longrightarrow \rH^1(Y, \cT_Y) \longrightarrow \Ext^1_Y(\Omega_Y^1, \cO_Y) \longrightarrow \rH^0(Y, \cExt^1_Y(\Omega_Y^1, \cO_Y)) \longrightarrow \\
&\longrightarrow \rH^2(Y, \cT_Y) \longrightarrow \Ext^2_Y(\Omega_Y^1, \cO_Y).
\end{align*}
With the identifications above, the vanishing of $\rH^0(Y, \cExt^1_Y(\Omega_Y, \cO_Y))$ implies that $\phi$ induces an isomorphism on tangent spaces and an injection on obstruction spaces. By \cite[Remark~4.12]{manetti_seattle} we get that $\phi$ is smooth.
\end{proof}

\begin{cor} \label{cor:zerocohomology_An_bundle_non_smoothability}
Let $S$ be a smooth $\CC$-scheme and $\pi_Y \colon Y \to S$ be an $A_n$-bundle, with $\cL_x, \cL_y, \cL_z \in \Pic(S)$ as in Definition~\ref{def:A_n_bundle}. Then we have:
\begin{enumerate}
\item[(i)] the sheaf $\cExt^1_Y (\Omega^1_Y, \cO_Y)$ is isomorphic to $\cExt^1_Y (\Omega^1_{Y/S}, \cO_Y)$;
\item[(ii)] if $\rH^0(S, \cL_z^{\otimes j}) = 0$ for all $2 \leq j \leq n+1$, then all infinitesimal deformations of $Y$ are locally trivial and $Y$ is not smoothable.
\end{enumerate}
\end{cor}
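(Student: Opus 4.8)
The plan is to prove (i) by comparing $\Omega^1_{Y/S}$ with $\Omega^1_Y$ through the relative cotangent sequence
\[
\pi_Y^* \Omega^1_S \longrightarrow \Omega^1_Y \longrightarrow \Omega^1_{Y/S} \longrightarrow 0,
\]
and then to deduce (ii) by feeding the resulting sheaf into Proposition~\ref{prop:pushforward_ext_sheaf_A_n_bundle} and Lemma~\ref{lemma:lci_scheme_with_no_Ext_have_locally_trivial_deformations}.

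First I would show that the sequence above is in fact short exact and \emph{locally split}. Over a trivialising affine open $S_i$ as in Definition~\ref{def:A_n_bundle}, the restriction $Y_i = \pi_Y\inv(S_i)$ is the base change $A_n \times_\CC S_i$, and the K\"ahler differentials of a product over $\CC$ decompose as a direct sum; concretely $\Omega^1_{Y_i} \simeq \Omega^1_{Y_i/S_i} \oplus (\pi_Y^*\Omega^1_S)\vert_{Y_i}$, which is precisely a local splitting of the displayed sequence. Here I use that $S$ is smooth, so that $\Omega^1_S$, and hence its pullback $\pi_Y^*\Omega^1_S$, is locally free. Applying the contravariant derived functors $\cExt^\bullet_Y(-,\cO_Y)$ to this locally split short exact sequence yields locally split exact sequences of $\cExt$-sheaves. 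Since $\pi_Y^*\Omega^1_S$ is locally free, $\cExt^i_Y(\pi_Y^*\Omega^1_S, \cO_Y) = 0$ for every $i \geq 1$; combined with the local splitting, this forces the canonical map
\[
\cExt^1_Y(\Omega^1_{Y/S}, \cO_Y) \longrightarrow \cExt^1_Y(\Omega^1_Y, \cO_Y)
\]
induced by the surjection $\Omega^1_Y \onto \Omega^1_{Y/S}$ to be a local isomorphism, hence an isomorphism. This proves (i).

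For (ii), combining (i) with Proposition~\ref{prop:pushforward_ext_sheaf_A_n_bundle} gives
\[
(\pi_Y)_* \cExt^1_Y(\Omega^1_Y, \cO_Y) \simeq (\pi_Y)_* \cExt^1_Y(\Omega^1_{Y/S}, \cO_Y) \simeq \bigoplus_{2 \leq j \leq n+1} \cL_z^{\otimes j}.
\]
Taking global sections and using that $\rH^0(Y, \cM) = \rH^0(S, (\pi_Y)_* \cM)$ for any $\cO_Y$-module $\cM$, I obtain
\[
\rH^0\bigl(Y, \cExt^1_Y(\Omega^1_Y, \cO_Y)\bigr) \simeq \bigoplus_{2 \leq j \leq n+1} \rH^0(S, \cL_z^{\otimes j}),
\]
which vanishes under the hypothesis $\rH^0(S, \cL_z^{\otimes j}) = 0$ for $2 \leq j \leq n+1$. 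To apply Lemma~\ref{lemma:lci_scheme_with_no_Ext_have_locally_trivial_deformations} I would then verify its remaining hypotheses: that $Y$ is reduced and that $Y \to \Spec \CC$ is a local complete intersection morphism. Both are local on $S$ and hold for the local model $A_n \times_\CC S_i$, since $A_n$ is integral, $A_n \into \AA^3$ is a hypersurface (so $Y \to S$ is lci), and $S \to \Spec \CC$ is smooth (so the composite $Y \to \Spec \CC$ is lci and $Y$ is reduced). The lemma then shows that every infinitesimal deformation of $Y$ is locally trivial; as $Y$ has $A_n$-singularities with $n \geq 1$, it is singular, hence not smoothable.

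The step I expect to be the main obstacle is the short-exactness --- specifically the injectivity on the left --- of the relative cotangent sequence in (i). The morphism $\pi_Y$ is not smooth, its fibres being the singular surfaces $A_n$, so one cannot invoke the standard left-exactness criterion valid for smooth morphisms. The remedy is the local product structure built into the definition of an $A_n$-bundle, which supplies an honest local splitting and thereby sidesteps any question about the connecting homomorphism in the $\cExt$ long exact sequence. Granting this, the rest is formal: it combines Proposition~\ref{prop:pushforward_ext_sheaf_A_n_bundle}, the compatibility of global sections with $(\pi_Y)_*$, and Lemma~\ref{lemma:lci_scheme_with_no_Ext_have_locally_trivial_deformations}.
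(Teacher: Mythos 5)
Your proposal is correct and follows essentially the same route as the paper: both establish that the relative cotangent sequence $0 \to \pi_Y^*\Omega^1_S \to \Omega^1_Y \to \Omega^1_{Y/S} \to 0$ is left exact and locally split via the Zariski-local product structure of the $A_n$-bundle, kill $\cExt^1_Y(\pi_Y^*\Omega^1_S,\cO_Y)$ using smoothness of $S$, and then feed Proposition~\ref{prop:pushforward_ext_sheaf_A_n_bundle} into Lemma~\ref{lemma:lci_scheme_with_no_Ext_have_locally_trivial_deformations}. Your explicit verification of the reduced and lci hypotheses of the lemma is a small addition the paper leaves implicit, but the argument is the same.
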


\begin{proof}
As $Y \to S$ is a Zariski-locally trivial fibration, the sequence of K{\"a}hler differentials of $Y \to S \to \Spec \CC$ is left exact and locally split:
\begin{equation*}
0 \longrightarrow \pi_Y^* \Omega^1_S  \longrightarrow \Omega^1_Y \longrightarrow \Omega^1_{Y/S} \longrightarrow 0.
\end{equation*}
This implies that the dual sequence
\begin{equation*}
0 \longrightarrow \cHom_Y (\Omega^1_{Y/S}, \cO_Y) \longrightarrow
\cHom_Y(\Omega^1_{Y}, \cO_Y) \longrightarrow
\cHom_Y(\pi^*_Y \Omega^1_S, \cO_Y) \longrightarrow 0
\end{equation*}
is exact. From the long exact sequence of Ext sheaves we get the following exact sequence of $\cO_Y$-modules:
\begin{equation*}
0 \longrightarrow \cExt^1_Y (\Omega^1_{Y/S}, \cO_Y) \longrightarrow \cExt^1_Y(\Omega^1_Y, \cO_Y) \longrightarrow \cExt^1_Y(\pi_Y^* \Omega^1_S, \cO_Y). 
\end{equation*}
But the last sheaf is zero because $S$ is smooth over $\CC$. This proves (i).

By Proposition~\ref{prop:pushforward_ext_sheaf_A_n_bundle} we deduce that 
\[
\rH^0 (Y,\cExt^1_Y(\Omega^1_Y, \cO_Y)) = \bigoplus_{2 \leq j \leq n+1} \rH^0 (S, \cL_z^{\otimes j}) = 0.
\]
From Lemma~\ref{lemma:lci_scheme_with_no_Ext_have_locally_trivial_deformations} we deduce (ii).
\end{proof}

\section{Toric $A_n$-bundles over $\PP^1$} \label{sec:toric_An_bundles}

\begin{definition} \label{def:A_n_triangle}
Fix an integer $n \geq 1$ and a 3-dimensional lattice $N$. An \emph{$A_n$-triangle} in $N$ is a lattice triangle $T \subseteq N_\RR$ such that:
\begin{enumerate}
\item there are no lattice points in the relative interior of $T$;
\item the edges of $T$ have lattice lengths $1$, $1$, and $n+1$;
\item $T$ is contained in a plane which has height $1$ with respect to the origin, i.e.\ there exists a linear form $w \in M = \Hom_\ZZ(N,\ZZ)$ such that $T$ is contained in the affine plane $H_{w,1} := \{ v \in N_\RR \mid \langle w, v \rangle = 1 \}$, where $\langle \cdot, \cdot \rangle$ is the duality pairing between $M$ and $N$.
\end{enumerate}
\end{definition}

\begin{figure}
\includegraphics[scale=0.8]{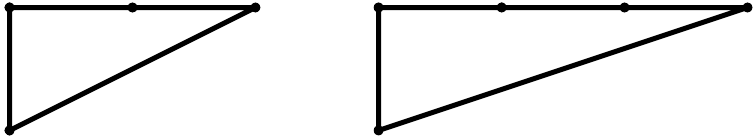}
\caption{An $A_1$-triangle and an $A_2$-triangle}
\end{figure}

If $T$ is an $A_n$-triangle in the 3-dimensional lattice $N$, consider the cone $\sigma \subseteq N_\RR$ spanned by the vertices of $T$. Then the affine toric variety associated to the cone $\sigma$, namely $\Spec \CC[\sigma^\vee \cap M]$, is isomorphic to
$
\Spec \CC[x,y,z,w] / (xy - z^{n+1})
$; every point with $x=y=z=0$ is a $c A_n$ singularity.

\begin{definition} \label{def:two_adjacent_A_n_triangles}
Fix an integer $n \geq 1$ and a 3-dimensional lattice $N$. \emph{Two adjacent $A_n$-triangles} in $N$ are two $A_n$-triangles $T_0$ and $T_1$ in $N$ such that:
\begin{enumerate}
\item[(4)] $T_0 \cap T_1$ is the edge of length $n+1$ for both $T_0$ and $T_1$;
\item[(5)] $T_0$ and $T_1$ lie in the two different half-spaces of $N_\RR$ defined by the plane $\spanR{T_0 \cap T_1}$.
\end{enumerate}
We say that $T_0$ and $T_1$ are \emph{almost-flat} if $\langle w_1, \rho_0 \rangle = 0$, where $\rho_0$ is the vertex of the triangle $T_0$ not in the segment $T_0 \cap T_1$ and $w_1 \in M$ is the linear form such that $T_1$ is contained in the plane $H_{w_1,1}$.
\end{definition}

Notice that the condition of almost-flatness is symmetric between $T_0$ and $T_1$ because $\langle w_1, \rho_0 \rangle = \langle w_0, \rho_1 \rangle$. 

\begin{remark}
Let $P$ be a reflexive polytope in the lattice $N$ of rank $3$ and let $T_0$ and $T_1$ be two adjacent $A_n$-triangles which are facets of $P$. The convexity of $P$ implies $\langle w_1, \rho_0 \rangle \leq 0$.

Consider the dual polytope
\[
P^* = \{ u \in M_\RR \mid \forall v \in P, \ \langle u, v \rangle \geq -1 \}.
\]
The dual face of $T_0$ (resp. $T_1$) is the vertex $-w_0$ (resp. $-w_1$) of $P^*$. The dual face of the edge $T_0 \cap T_1$ is the edge $\conv{-w_0, -w_1}$ of $P^*$. The segment $\conv{-w_0, -w_1}$ has lattice length equal to $1 - \langle w_1, \rho_0 \rangle$.
\end{remark}

\begin{figure}
\begin{center}
\def\svgwidth{6cm}
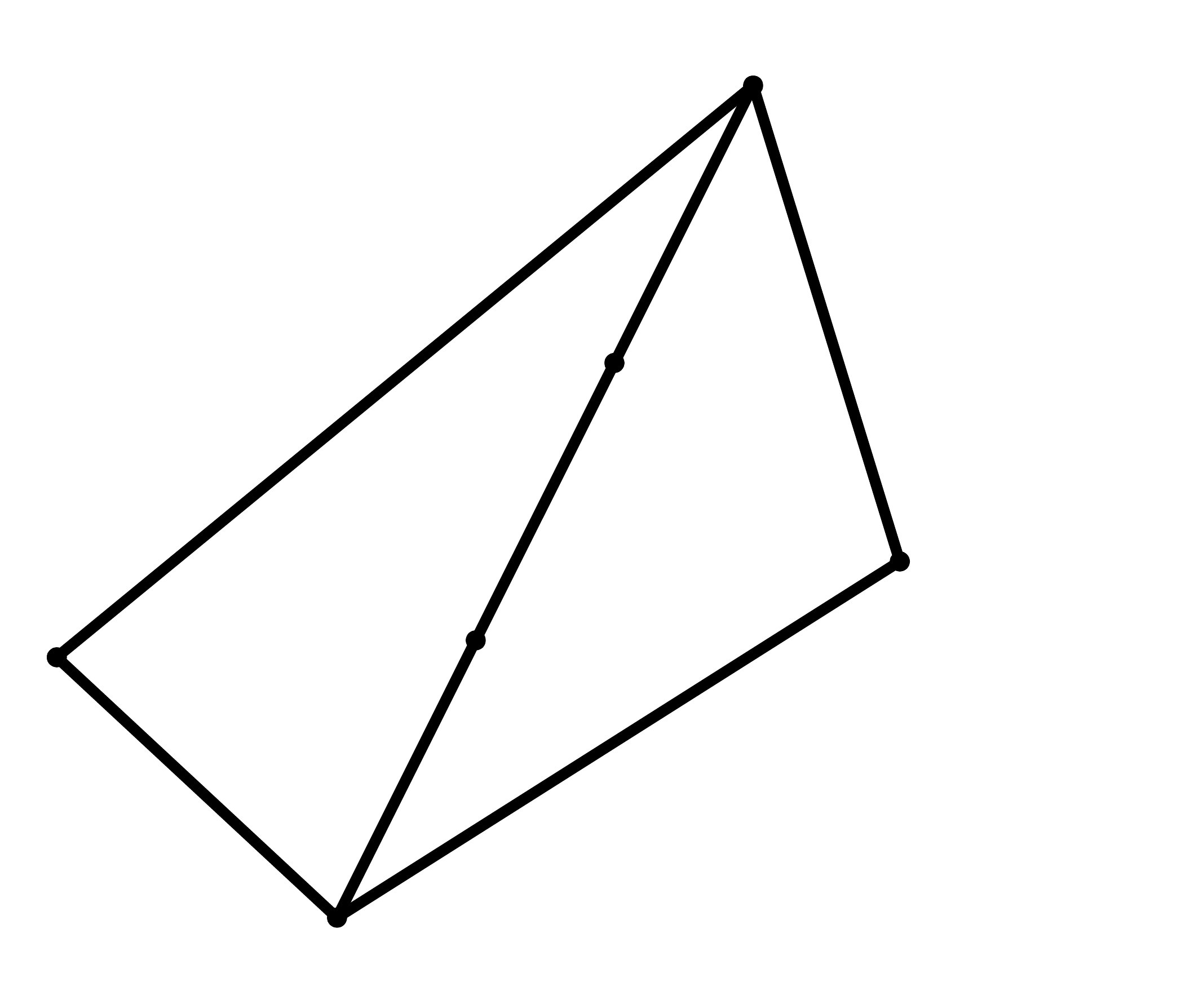
\end{center}
\caption{Two adjacent $A_2$-triangles}
\label{fig:two_adjacent_triangles}
\end{figure}

\begin{setup} \label{set:toric_An_bundle_P1}
Let $T_0$ and $T_1$ be two adjacent $A_n$-triangles in a 3-dimensional lattice $N$. We denote by $\rho_u$ and $\rho_v$ the vertices of the segment $T_0 \cap T_1$. Let $\rho_0$ (resp. $\rho_1$) be the vertex of $T_0$ (resp. $T_1$) which does not lie on $T_0 \cap T_1$ (see Figure~\ref{fig:two_adjacent_triangles}). Let $Y$ be the toric variety associated to the fan in $N$ generated by $\cone{\rho_0, \rho_u, \rho_v}$ and $\cone{\rho_1, \rho_u, \rho_v}$. The projection $N \to N / (N \cap (\RR \rho_u + \RR \rho_v)) \simeq \ZZ$ induces a toric morphism $\pi \colon Y \to \PP^1$.
\end{setup}

\begin{proposition} \label{prop:toric_A_n_bundles}
Let $T_0$ and $T_1$ be two adjacent $A_n$-triangles in a 3-dimensional lattice $N$. Then the toric morphism $\pi \colon Y \to \PP^1$, constructed in Setup~\ref{set:toric_An_bundle_P1}, is an $A_n$-bundle and there exists an isomorphism
\begin{equation} \label{eq:isom_fascio_Ext_P1}
\pi_* \cExt^1_Y(\Omega^1_Y, \cO_Y) \simeq \bigoplus_{2 \leq j \leq n+1} \cO_{\PP^1}\left( -j \left( \langle w_1, \rho_0 \rangle + 1 \right) \right).
\end{equation}
Moreover, if $\langle w_1, \rho_0 \rangle \geq 0$ then all infinitesimal deformations of $Y$ are locally trivial and $Y$ is not smoothable.
\end{proposition}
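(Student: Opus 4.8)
The plan is to reduce everything to the two standard affine charts of $\PP^1$ and to the local computation already carried out in Proposition~\ref{prop:pushforward_ext_sheaf_A_n_bundle}. Write $\PP^1 = U_0 \cup U_1$ for the standard toric cover, where $U_0, U_1 \cong \AA^1$ correspond to the two rays of the fan in $N/(N \cap (\RR\rho_u + \RR\rho_v)) \simeq \ZZ$. Since $\rho_u, \rho_v$ map to $0$ under the projection $m \in M$ while $\rho_0, \rho_1$ map to the two opposite primitive generators (using condition (5) and the fact that each $\sigma_i$ is an $A_n \times \AA^1$ cone), I get $\pi\inv(U_0) = \Spec\CC[\sigma_0^\vee \cap M]$ and $\pi\inv(U_1) = \Spec\CC[\sigma_1^\vee \cap M]$, where $\sigma_0 = \cone{\rho_0, \rho_u, \rho_v}$ and $\sigma_1 = \cone{\rho_1, \rho_u, \rho_v}$. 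By the discussion after Definition~\ref{def:A_n_triangle} each of these is isomorphic to $A_n \times \AA^1$, the $\AA^1$ factor being the base chart. To verify Definition~\ref{def:A_n_bundle}, I would exhibit line bundles $\cL_x, \cL_y, \cL_z \in \Pic(\PP^1)$ whose transition cocycle $g^x_{01}, g^y_{01}, g^z_{01}$ over $U_0 \cap U_1$ relates the local $A_n$-coordinates; the global embedding $Y \into E$ is then glued from the two local embeddings $A_n \times \AA^1 \into \AA^3 \times \AA^1$, the identity $g^x_{01} g^y_{01} = (g^z_{01})^{n+1}$ ensuring that the local equations $x_i y_i = z_i^{n+1}$ patch to a global one.

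The heart of the argument is the explicit determination of $\cL_z$. Since any $A_n$-triangle is lattice-equivalent to the standard one, I may choose an identification $N \simeq \ZZ^3$ sending $\rho_u \mapsto (0,1,0)$, $\rho_v \mapsto (n+1,1,0)$, $\rho_0 \mapsto (0,0,1)$, so that $\sigma_0$ becomes the standard cone of $A_n \times \AA^1$, the projection is $m = (0,0,1)$, and $w_0 = (0,1,1)$. Writing $a := \langle w_1, \rho_0 \rangle$, the equalities $\langle w_1, \rho_u \rangle = \langle w_1, \rho_v \rangle = 1$ force $w_1 = (0,1,a)$, and conditions (4)--(5) together with $\sigma_1 \simeq A_n \times \AA^1$ force $\rho_1 = (b, a+1, -1)$ for some integer $b$ left unconstrained by the triangle conditions. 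On chart $0$ the coordinates are $x_0 = \chi^{(1,0,0)}$, $y_0 = \chi^{(-1,n+1,0)}$, $z_0 = \chi^{(0,1,0)}$; after the unimodular shear carrying $\sigma_1$ to standard form, the analogous chart-$1$ coordinates are $x_1 = \chi^{(1,0,b)}$, $y_1 = \chi^{(-1,n+1,(a+1)(n+1)-b)}$, $z_1 = \chi^{(0,1,a+1)}$. Reading the transition functions as character quotients, with base coordinate $w = \chi^{(0,0,1)}$, gives
\begin{equation*}
g^z_{01} = z_0/z_1 = w^{-(a+1)}, \qquad g^x_{01} = w^{-b}, \qquad g^y_{01} = w^{\,b-(a+1)(n+1)},
\end{equation*}
which satisfy $g^x_{01}g^y_{01} = (g^z_{01})^{n+1}$. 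Crucially $g^z_{01}$ is independent of $b$ and identifies $\cL_z \simeq \cO_{\PP^1}(-(a+1)) = \cO_{\PP^1}(-(\langle w_1, \rho_0 \rangle + 1))$.

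With $S = \PP^1$ smooth and $\cL_z$ computed, the isomorphism \eqref{eq:isom_fascio_Ext_P1} follows by combining Corollary~\ref{cor:zerocohomology_An_bundle_non_smoothability}(i), which replaces $\Omega^1_Y$ by $\Omega^1_{Y/S}$, with Proposition~\ref{prop:pushforward_ext_sheaf_A_n_bundle}, yielding
\begin{equation*}
\pi_* \cExt^1_Y(\Omega^1_Y, \cO_Y) \simeq \bigoplus_{2 \leq j \leq n+1} \cL_z^{\otimes j} \simeq \bigoplus_{2 \leq j \leq n+1} \cO_{\PP^1}\bigl(-j(\langle w_1, \rho_0 \rangle + 1)\bigr).
\end{equation*}

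Finally, if $\langle w_1, \rho_0 \rangle = a \geq 0$, then for every $j$ with $2 \leq j \leq n+1$ the integer $-j(a+1) \leq -2$ is negative, so $\rH^0(\PP^1, \cL_z^{\otimes j}) = 0$; since $Y$ is a reduced, singular, local complete intersection over $\CC$, Corollary~\ref{cor:zerocohomology_An_bundle_non_smoothability}(ii) (through Lemma~\ref{lemma:lci_scheme_with_no_Ext_have_locally_trivial_deformations}) shows that all infinitesimal deformations of $Y$ are locally trivial and that $Y$ is not smoothable. I expect the main obstacle to be exactly the transition-function computation: fixing the sign and degree conventions so that $\cL_z$ comes out as $\cO_{\PP^1}(-(a+1))$ rather than its dual, and checking that this degree depends only on $a = \langle w_1, \rho_0 \rangle$ and not on the free parameter $b = \rho_1^{(1)}$ --- that is, that $\cL_z$, unlike $\cL_x$ and $\cL_y$, is a genuine invariant of the pair of adjacent triangles.
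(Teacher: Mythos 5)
Your proof is correct and lands in the same place as the paper --- identifying $\cL_z \simeq \cO_{\PP^1}\bigl(-(\langle w_1,\rho_0\rangle+1)\bigr)$ and then feeding this into Proposition~\ref{prop:pushforward_ext_sheaf_A_n_bundle} and Corollary~\ref{cor:zerocohomology_An_bundle_non_smoothability} --- but the route to that identification is genuinely different. The paper normalises the rays via Lemma~\ref{lemma:GL3_transformation_vectors_An_bundle} (putting $\rho_1$ at $(0,0,1)$ and $\rho_0$ at $(a,b,-1)$), computes the divisor class group $\ZZ\oplus\ZZ/r\ZZ$ from the ray matrix, presents $Y$ as a geometric quotient of $\AA^4\smallsetminus\rV(x_0,x_1)$ by an explicit group with a torsion factor, and writes the closed embedding into $E$ globally in Cox coordinates as $[x_0:x_1:u:v]\mapsto[x_0:x_1:u^{n+1}:v^{n+1}:uv]$, reading the degrees $d_x,d_y,d_z$ off the weights. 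You never leave the two affine toric charts: you normalise $\sigma_0$ to the standard $A_n\times\AA^1$ cone, shear $\sigma_1$ into standard form, and extract the transition cocycle $g^x_{01},g^y_{01},g^z_{01}$ as quotients of characters, verifying $g^x_{01}g^y_{01}=(g^z_{01})^{n+1}$ directly. Your computation checks out (I verified the dual-cone generators, the sheared chart-$1$ characters, and the cocycle identity), and it is more elementary --- no class group, no B\'ezout, no torsion quotient --- while making transparent why $\cL_z$ depends only on $\langle w_1,\rho_0\rangle$ whereas $\cL_x,\cL_y$ carry the free parameter $b$; the paper's Cox presentation buys a single global formula for $\iota$ instead of a chart-by-chart gluing. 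The one step worth spelling out is the degree convention you yourself flag: since $z_i\in\Gamma(U_i,\cL_z^\vee)$ in Definition~\ref{def:A_n_bundle}, the relation $z_0=w^{-(a+1)}z_1$ with $w$ the coordinate on $U_0$ gives $\cL_z^\vee\simeq\cO_{\PP^1}(a+1)$, hence $\cL_z\simeq\cO_{\PP^1}(-(a+1))$, which is the sign you state and the one the statement requires; similarly, your justification that the third coordinate of $\rho_1$ equals $-1$ should cite the emptiness of the height-one triangle $\conv{\rho_u,\hat\rho,\rho_1}$ (as in Lemma~\ref{lemma:GL3_transformation_vectors_An_bundle}) rather than the $cA_n$ description of $\sigma_1$, but this is presentational, not a gap.
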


Before proving this proposition we prove the following lemma.

\begin{lemma} \label{lemma:GL3_transformation_vectors_An_bundle}
After a $\rG \rL_3(\ZZ)$-transformation, in Setup~\ref{set:toric_An_bundle_P1} we may assume that $N = \ZZ^3$ and
\begin{equation*}
\rho_{0} = \begin{pmatrix}
a \\ b \\ -1
\end{pmatrix}\!\!, \
\rho_{1} = \begin{pmatrix} 0 \\ 0 \\ 1 \end{pmatrix}\!\!, \
\rho_u = \begin{pmatrix} 1 \\ 0 \\ 0 \end{pmatrix}\!\!, \
\rho_v = \begin{pmatrix}  -n \\  n+1 \\ 0 \end{pmatrix}\!\!, 
\end{equation*}
for some $a,b \in \ZZ$.
\end{lemma}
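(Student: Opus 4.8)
The plan is to produce the change of basis directly: I will exhibit an explicit $\ZZ$-basis of $N$ in which the four lattice points take the desired shape. Before anything else I record the elementary facts that each of $\rho_u, \rho_v, \rho_0, \rho_1$ is primitive (being a vertex of a triangle lying on a height-$1$ plane $H_{w,1}$ forces the defining form $w$ to be primitive, hence the vertex to be primitive), that $\rho_u$ and $\rho_v$ are linearly independent (if $\rho_v = \lambda \rho_u$ then applying $w_1$ gives $\lambda = 1$, contradicting $\rho_u \neq \rho_v$), so that $\Pi := \spanR{T_0 \cap T_1} = \RR \rho_u + \RR \rho_v$ is a genuine plane, and that $f := \tfrac{1}{n+1}(\rho_v - \rho_u)$ is a primitive lattice vector in $N \cap \Pi$ (this is the content of the edge $T_0 \cap T_1$ having lattice length $n+1$). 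The vectors I will use are $e_1 = \rho_u$, $e_2 = \rho_u + f$, and $e_3 = \rho_1$; note that $\rho_v = -n e_1 + (n+1) e_2$ and $e_2 - e_1 = f$, so once $\{e_1,e_2,e_3\}$ is shown to be a basis the images of $\rho_u$, $\rho_v$, $\rho_1$ are immediate.

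The key step is to prove that $\{e_1, e_2, e_3\}$ is a $\ZZ$-basis of $N$, equivalently that $\{\rho_u, f, \rho_1\}$ is. For this I work inside the rank-$2$ lattice $\Lambda_1 = \ker(w_1 \colon N \to \ZZ)$, over which the lattice triangle $T_1$ is measured. Writing $u = \rho_u - \rho_1$, the edge-length hypotheses say that $u$ and $f$ are primitive in $\Lambda_1$ and that the two apex-to-base vectors are $u$ and $u + (n+1) f$. Computing the normalized area of $T_1$ by Pick's theorem (it has no interior points and $n+3$ boundary points) gives $|\det(u, \, u + (n+1)f)| = n+1$, whence $|\det(u, f)| = 1$ and $\{u, f\}$ is a basis of $\Lambda_1$. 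Since $w_1$ is primitive with $\langle w_1, \rho_1\rangle = 1$, adjoining $\rho_1$ yields a basis $\{u, f, \rho_1\}$ of $N$, and replacing $u$ by $u + \rho_1 = \rho_u$ is a unimodular operation, so $\{\rho_u, f, \rho_1\}$ is a basis. Running the identical argument for $T_0$ shows that $\{\rho_u, f, \rho_0\}$ is also a basis of $N$.

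I then define the element of $\rG\rL_3(\ZZ)$ to be the one carrying the basis $(e_1, e_2, e_3)$ to the standard basis of $\ZZ^3$; by construction $\rho_u, \rho_v, \rho_1$ map to $(1,0,0)$, $(-n, n+1, 0)$, $(0,0,1)$. It remains to pin down $\rho_0 = \alpha \rho_u + \beta f + \gamma \rho_1$. The two-bases fact forces the change-of-basis matrix between $\{\rho_u, f, \rho_1\}$ and $\{\rho_u, f, \rho_0\}$ to be unimodular, and this matrix is upper-triangular with diagonal $(1,1,\gamma)$, so $\gamma = \pm 1$. Since $e_3^*$, the functional dual to $e_3$, is the primitive form vanishing on $\Pi$ with $\langle e_3^*, \rho_1\rangle = 1$, the integer $\gamma = \langle e_3^*, \rho_0\rangle$ is exactly the third coordinate of $\rho_0$; by condition (5) the point $\rho_0$ lies in the half-space opposite to $\rho_1$, forcing $\gamma < 0$ and hence $\gamma = -1$. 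Rewriting $f = e_2 - e_1$ gives $\rho_0 = (\alpha - \beta) e_1 + \beta e_2 - e_3$, i.e.\ $\rho_0 \mapsto (a, b, -1)$ with $a = \alpha - \beta$ and $b = \beta$, as required. I expect the main obstacle to be the second paragraph: converting the purely combinatorial $A_n$-triangle axioms into the unimodularity statement $|\det(u,f)| = 1$, together with the care needed to conclude that $\rho_0$ sits at lattice height exactly $-1$ rather than merely on the negative side of $\Pi$.
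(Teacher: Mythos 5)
Your proof is correct and follows essentially the same route as the paper: both pick the basis $\{\rho_u,\ \rho_u+f,\ \rho_1\}$ (your $e_2=\rho_u+f$ is the paper's $\hat{\rho}$), deduce that the third coordinate of $\rho_0$ is $\pm 1$ from the fact that $\{\rho_u, f, \rho_0\}$ is also a basis, and fix the sign to $-1$ using condition (5). The only difference is that you spell out, via Pick's theorem, the standard fact that the vertices of an empty lattice triangle at height $1$ form a basis of $N$, which the paper simply asserts.
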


\begin{proof}
Let $\hat{\rho} \in N$ be the lattice point on the segment between $\rho_u$ and $\rho_v$ which is the closest one to $\rho_u$. The triangle with vertices $\rho_u, \rho_1, \hat{\rho}$ is an empty triangle at height $1$, so $\{ \rho_u, \rho_{x_1}, \hat{\rho} \}$ is a basis of $N$. Without loss of generality we may assume that $\rho_u = (1,0,0)$, $\hat{\rho} = (0,1,0)$ and $\rho_1 = (0,0,1)$. Since on the edge between $\rho_u$ and $\rho_v$ there are $n+2$ lattice points, we have $\rho_v = \rho_u + (n+1) (\hat{\rho} - \rho_u) = (-n, n+1, 0)$.

Assume $\rho_0 = (a,b,c)$ for some $a,b,c \in \ZZ$. Since $\rho_u, \hat{\rho}, \rho_0$ are the vertices of an empty triangle at height $1$, they constitute a basis of $N$. Therefore $c = \det ( \rho_u \vert \hat{\rho} \vert \rho_0 ) = \pm 1$.

Since $\rho_0$ and $\rho_1$ have to be in the two different half-spaces in which the plane $\RR \rho_u + \RR \rho_v = (0,0,1)^\perp$ divides $N_\RR$, we have $c<0$, so $c=-1$.
\end{proof}

\begin{proof}[Proof of Proposition~\ref{prop:toric_A_n_bundles}]
By Lemma~\ref{lemma:GL3_transformation_vectors_An_bundle}, the ray map $\ZZ^4 \to N = \ZZ^3$ of $Y$ is given by the matrix
\begin{equation*}
\begin{pmatrix}
a & 0 & 1 & -n \\
b & 0 & 0 & n+1 \\
-1 & 1 & 0 & 0
\end{pmatrix}\!.
\end{equation*}
One can see that the ideal of $\ZZ$ generated by the $2 \times 2$ minors is $\ZZ$ itself and the ideal generated by the $3 \times 3$ minors is $r \ZZ$, where $r = \mathrm{gcd}(n+1,b)>0$. Let $p, q \in \ZZ$ be such that $b = r p$ and $n+1 = r q$. The kernel of the ray map is generated by the primitive vector $(q,q,-np-aq,-p)$. By B{\'e}zout let $s,t  \in \ZZ$ be such that $s p + t q = 1$. 
The cokernel of the transpose of the ray map is the homomorphism $\ZZ^4 \onto \ZZ \oplus \ZZ / r \ZZ$ given by the matrix
\begin{equation*}
\begin{pmatrix}
q & q & -qa-pn & -p \\
\bar{s} & \bar{s} & - \bar{s} \bar{a} + \bar{t} \bar{n} & \bar{t}
\end{pmatrix}\!,
\end{equation*}
where $\bar{\cdot}$ denotes the reduction modulo $r$.
By \cite[Theorem~4.1.3]{cls}, the divisor class group of $Y$ is isomorphic to $\ZZ \oplus \ZZ/r\ZZ$.

Let the group
\begin{equation*}
G = \left\{ \left. \left( \lambda^{q} \epsi^s, \lambda^{q} \epsi^s, \lambda^{-qa-pn} \epsi^{-sa+tn}, \lambda^{-p} \epsi^t \right) \in \GG_\rmm^4 \right\vert \lambda \in \GG_\rmm, \epsi \in \boldsymbol{\mu}_r \right\}
\end{equation*}
act linearly on the affine space $\AA^4 = \Spec \CC[x_0, x_1, u,v]$.
By \cite[\S5.1]{cls},  $Y$ is the geometric quotient of $\AA^4 \smallsetminus \rV(x_0, x_1) = \Spec \CC[x_0^\pm, x_1, u,v] \cup \Spec \CC[x_0, x_1^\pm, u,v]$ with respect to this action.
The variables $x_0, x_1, u, v$ can be identified with the Cox coordinates of $Y$ associated to the rays $\rho_0, \rho_1, \rho_u, \rho_v$, respectively.
The toric morphism $\pi \colon Y \to \PP^1$ is defined by
\[
[x_0 : x_1 : u : v] \mapsto [x_0 : x_1],
\]
where $[x_0 : x_1 : u : v]$ denotes the point of $Y$ corresponding to the $G$-orbit of the point $(x_0,x_1,u,v) \in \AA^4$.

We consider the following integers
\begin{align*}
d_x &= b - (n+1)(a+b), \\
d_y &= -b, \\
d_z &= -a - b.
\end{align*}
We consider the line bundles $\cL_x = \cO_{\PP^1}(d_x)$, $\cL_y = \cO_{\PP^1}(d_y)$, $\cL_z = \cO_{\PP^1}(d_z)$ and the sheaf $\cE = \cL_x \oplus \cL_y \oplus \cL_z$ on $\PP^1$. Let $\pi_E \colon E \to \PP^1$ be the total space of $\cE$ over $\PP^1$.
Then $E$ is the geometric quotient of $\Spec \CC[x_0,x_1,x,y,z] \smallsetminus \rV(x_0, x_1)$ with respect to the linear action of $\GG_\rmm$ with weights $(1,1,d_x, d_y, d_z)$.
The variables $x_0,x_1,x,y,z$ can be identified with the Cox coordinates of the toric variety $E$.
We denote by $[x_0 : x_1 : x : y : z]$ the point of $E$ corresponding to the $\GG_\rmm$-orbit of $(x_0,x_1,x,y,z) \in \AA^5$.

It is easy to check that the map $\iota \colon Y \to E$ given by
\[
[x_0 : x_1 : u : v] \mapsto [x_0 : x_1 : u^{n+1} : v^{n+1} : uv ]
\]
is a closed embedding, locally defined by $xy-z^{n+1} = 0$. So $\pi \colon Y \to \PP^1$ is an $A_n$-bundle and we are in the situation of Definition~\ref{def:A_n_bundle}.

The triangle $T_1$ is contained in the plane $H_{w_1, 1}$, where $w_1 = (1,1,1)$. Therefore $\langle w_1, \rho_0 \rangle = a+b-1 = -d_z -1$. By Proposition~\ref{prop:pushforward_ext_sheaf_A_n_bundle} and Corollary~\ref{cor:zerocohomology_An_bundle_non_smoothability} we have the isomorphism \eqref{eq:isom_fascio_Ext_P1}.

The inequality $\langle w_1, \rho_0 \rangle \geq 0$ implies that $\cL_z$ is a negative line bundle on $\PP^1$ and, by Corollary~\ref{cor:zerocohomology_An_bundle_non_smoothability}, that all infinitesimal deformations of $Y$ are locally trivial.
\end{proof}

\begin{proof}[Proof of Theorem~\ref{thm:main}]
It is an immediate consequence of Proposition~\ref{prop:toric_A_n_bundles}.
\end{proof}

\begin{remark}
There are 273 reflexive polytopes of dimension 3 which satisfy the condition of Theorem~\ref{thm:main}: the complete list is given in \cite[Remark~4.15]{petracci_phd}. Therefore, there are at least 273 non-smoothable Gorenstein Fano toric threefolds.
\end{remark}

\begin{proof}[Proof of Theorem~\ref{thm:example}]
In the lattice $N = \ZZ^3$ consider the reflexive polytope 
$P$ that is the convex hull of the following vectors:
\begin{equation*}
\rho_0 = \begin{pmatrix}
0 \\ 0 \\ 1
\end{pmatrix}\!\!, \
\rho_1 = \begin{pmatrix}
0 \\ 1 \\ -1
\end{pmatrix}\!\!, \
\rho_u = \begin{pmatrix}
0 \\ 1 \\ 0
\end{pmatrix}\!\!, \
\rho_v = \begin{pmatrix}
-2 \\ -1 \\ 0
\end{pmatrix}\!\!, \
\xi = \begin{pmatrix}
1 \\ 0 \\ 0
\end{pmatrix}\!\!.
\end{equation*}
Let $\Sigma$ be the spanning fan of $P$. The maximal cones of $\Sigma$ are:
\begin{align*}
\cone{\rho_0,\rho_u,\rho_v},   &\qquad \cone{\rho_1, \rho_u, \rho_v}, \\
\cone{\rho_0,\rho_u,\xi}, &\qquad \cone{\rho_1,\rho_u,\xi}, \\
\cone{\rho_0,\rho_v,\xi}, &\qquad \cone{\rho_1,\rho_v,\xi}.
\end{align*}
The singular cones of $\Sigma$ are the ones in the first row and $\cone{\rho_u, \rho_v}$. The corresponding facets of $P$ are two adjacent $A_1$-triangles. We have $w_1 = (-1,1,0)$ and $\langle w_1, \rho_0 \rangle = 0$, so the two $A_1$-triangles are almost flat.

Let $X$ be the Fano toric threefold associated to the fan $\Sigma$. The singular locus of $X$ is the curve $C$, which is the closure of the torus-orbit corresponding to $\cone{\rho_u,\rho_v}$. The curve $C$ is isomorphic to $\PP^1$ and the singularities of $X$ along $C$ are transverse $A_1$.

By Proposition~\ref{prop:toric_A_n_bundles} the sheaf $\cExt^1_X(\Omega^1_X, \cO_X)$ is the line bundle $\cO_{C}(-2)$ on $C$. Therefore $\rH^0(X, \cExt^1_X(\Omega^1_X, \cO_X) ) = 0$.

Let $j \colon U \into X$ be the inclusion of the smooth locus of $X$. Notice that the sheaf of derivations $\cT_X = \cHom_X(\Omega_X^1, \cO_X)$ is isomorphic to $j_* \Omega_U^2 \otimes \cO_X(-K_X)$, because these two sheaves are both reflexive and coincide on the open subset $U$ whose complement has codimension $2$. As $-K_X$ is ample, by Bott--Steenbrink--Danilov vanishing \cite[Theorem~9.3.1]{cls} we have $\rH^1 (X, \cT_X) = 0$. This argument comes from the proof of \cite[Theorem~5.1]{totaro_jumping}.

From the five term exact sequence for Ext, which is rewritten in the proof of Lemma~\ref{lemma:lci_scheme_with_no_Ext_have_locally_trivial_deformations}, we deduce that $\Ext^1_X(\Omega_X^1, \cO_X) = 0$. This implies that all infinitesimal deformations of $X$ are trivial. In particular, $X$ is not smoothable.
\end{proof}

\bibliography{Biblio_polytopes}


\end{document}